\renewcommand{\leq}{\leqslant}
\renewcommand{\geq}{\geqslant}
\def\build#1_#2^#3{\mathrel{
\mathop{\kern 0pt#1}\limits_{#2}^{#3}}}
\newcommand{\Zst}{\mathbb{Z}^3_*}
\theoremstyle{plain}
\newtheorem{theorem}{Theorem}
\newtheorem{lemma}{Lemma}
\theoremstyle{definition}
\newtheorem{definition}{Definition}
\theoremstyle{remark}
\begin{document}
\title{Embedding  binary sequences into Bernoulli site percolation on $\mathbb{Z}^3$}
\author{M.\ R.\ Hil\'ario\footnotemark[1], B.\ N.\ B.\ de Lima\footnotemark[1], P.\ Nolin\footnotemark[2], V.\ Sidoravicius\footnotemark[3]\\
\footnotemark[1] UFMG, \footnotemark[2] ETH Z\"urich, \footnotemark[3] IMPA}
\date{}
\maketitle


\begin{abstract}
We investigate the problem of embedding infinite binary sequences into Bernoulli
site percolation on $\mathbb{Z}^d$ with parameter $p$, known also as percolation
of words.\ In 1995, I.\ Benjamini and H.\ Kesten  proved that, for $d \geq 10$ and
$p=1/2$,  all sequences can be embedded, almost surely. They conjectured that
the same should hold for $d \geq 3$. In this paper we consider  $d \geq 3$ and
$p \in (p_c(d), 1-p_c(d))$, where $p_c(d)<1/2$ is the critical threshold for site
percolation on  $\mathbb{Z}^d$. We show that there exists an integer $M = M (p)$,
such that, a.s., every binary sequence, for which every run of consecutive {$0$s} or
{$1$s} contains at least $M$ digits, can be embedded.
\end{abstract}

\section{Introduction}

\noindent {\bf 1.1. Statement of the result. }Fix $d \geq 3$,  and consider Bernoulli site 
percolation on $\mathbb{Z}^d$ with parameter $p \in (0,1)$, \emph{i{.}e{.\ }}take 
$(\Omega, \, \mathcal{A}, \, \mathbb{P}_p)$,
where $\Omega = \{0,1\}^{\mathbb{Z}^d}$, $\mathcal{A}$ is the canonical product
$\sigma$-algebra, and $\mathbb{P}_p = \otimes_{v \in \mathbb{Z}^d} P_p^v$, where
$P_p^v({\omega}_v = 1) = p = 1- P_p^v({\omega}_v = 0)$. An element
$\omega \in \Omega = \{0,1\}^{\mathbb{Z}^d}$ is called a percolation configuration.

Let $\xi \in \Xi:=\{0,1\}^{\mathbb{N}}$, where $\mathbb{N} = \{1,2,\ldots\}$.
For a given $v \in \mathbb{Z}^d$ and  $\omega\in\Omega$, we say that \emph{$\xi$
can be embedded in $\omega$ starting from $v$}, if there exists an infinite
nearest-neighbor vertex self-avoiding path $v_0=v,v_1,v_2,\ldots$ such that
$\omega_{v_i} = \xi_i$ for all $i \geq 1$.  We say that \emph{$\xi$ can be embedded
in $\omega$}, if there exists $v = v(\omega) \in \mathbb{Z}^d$ such that $\xi$ can
be embedded in $\omega$ starting from $v$.

\medskip
Given $v \in \mathbb{Z}^d$ and  $\omega \in \Omega$, define
$$
S_v(\omega):=\{\text{$\xi \in \Xi$ : $\xi$ can be embedded in $\omega$ starting from $v$}\}
$$
and
$$
S_\infty(\omega):=\cup_{v \in \mathbb{Z}^d}S_v(\omega),
$$
which are measurable \cite [Prop. 2]{Benjamini_Kesten}.

\medskip
\noindent {Among other results, Benjamini and Kesten proved in \cite{Benjamini_Kesten}
 that for $d \geq 10$ and $p=1/2$,  all sequences can be embedded, almost
surely.} More precisely,
\begin{theorem}
\label{BK10}
(\cite{Benjamini_Kesten}) {Consider} $\mathbb{Z}^d_+$ with all edges oriented in the positive
direction and $p=1/2$. Then for $d\geq 10$,
\begin{equation}
\label{all}
\mathbb{P}_{1/2}\Big(S_\infty=\Xi\Big)=1,
\end{equation}
and for $d\geq 40$,
\begin{equation}
\mathbb{P}_{1/2}\Big( S(v)=\Xi \, {\text{ for some v}}  \Big)=1.
\end{equation}
\end{theorem}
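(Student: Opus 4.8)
The plan is to reduce both claims to the construction of a suitable tree embedded in the oriented lattice, and to establish the existence of such a tree by comparison with a supercritical branching process. A useful first remark is that in $\mathbb{Z}^d_+$ with all edges oriented positively every oriented path is automatically self-avoiding, because the sum of the coordinates strictly increases along it; hence the only thing to control is the matching of the prescribed symbols, never the avoidance of repeated vertices. For the weaker conclusion \eqref{all} (valid for $d\ge 10$) it then suffices to show that each fixed sequence $\xi$ is almost surely embeddable from \emph{some} vertex, and afterwards to promote this to a statement holding simultaneously for all $\xi$. For the stronger conclusion ($d\ge 40$) I would instead build, rooted at a single vertex, a full embedded binary tree: an infinite tree whose nodes are lattice sites linked by oriented paths, and such that from every node one oriented branch leads, within a bounded number of steps, to a node carrying the label $0$ and another to a node carrying the label $1$. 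Any $\xi\in\Xi$ is then read off along a descending path, so that $S(v)=\Xi$ for the root $v$.

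To produce these trees I would run a renormalized exploration in blocks of $L$ consecutive levels. A vertex is called active at the start of a block if a matching path has reached it. From an active vertex $v$ I look at the oriented paths of length $L$ contained in the forward cone of $v$: the sites reachable in this way are numerous (their number grows like $\binom{L+d-1}{d-1}$), and since distinct sites carry independent labels, each is equally likely to show the symbol required at this stage. A first--moment and second--moment estimate then show that, with probability tending to $1$ as $L\to\infty$, many reachable sites carry the prescribed symbol; a bounded number of these are retained as the new active vertices, chosen so that their forward cones are disjoint and the exploration may be iterated with (almost) independent randomness. Chaining the blocks exhibits a branching process whose mean offspring grows with $d$ and exceeds $1$ exactly when $d$ passes the stated threshold. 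The additional demand in the stronger statement --- that \emph{both} a $0$-labeled and a $1$-labeled exit be produced from every active vertex, so that the entire binary tree rather than a single line of descent survives --- is what raises the requirement from $d\ge 10$ to $d\ge 40$.

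It remains to pass from a single fixed sequence to every sequence, and this, together with the control of dependencies, is where I expect the real work to lie. For \eqref{all} the passage is handled by ergodicity: the uniform positive lower bound on the embedding probability from one vertex, combined with independence over a sparse family of well-separated starting regions, forces each fixed $\xi$ to be embeddable almost surely; and because the underlying event is the almost sure occurrence of a robust block structure that serves every target simultaneously, the conclusion holds for the whole uncountable family $\Xi$ at once. The stronger statement is more transparent in this respect, since the embedded binary tree already accommodates all sequences. The principal obstacle throughout is making rigorous the heuristic that the forward cones explored in different blocks rarely intersect: bounding the correction to the idealized offspring mean coming from self-intersections of oriented paths is precisely where the dimension must be taken large, and it is this estimate that pins down the numerical thresholds $10$ and $40$.
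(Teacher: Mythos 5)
A preliminary remark: the paper you were given does not prove Theorem \ref{BK10} at all. It is quoted as background from \cite{Benjamini_Kesten}, and the paper's own proof concerns the different and weaker statement of Theorem \ref{thm:words} ($M$-stretched words, $d \geq 3$). So your proposal can only be measured against the original argument of Benjamini and Kesten and on its own internal logic; on that score it contains a genuine gap.

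The gap is the passage from ``each fixed $\xi$ can a.s.\ be embedded'' to ``a.s.\ \emph{every} $\xi \in \Xi$ can be embedded'', which is what \eqref{all} asserts. For $d \geq 10$ your argument produces, for each fixed $\xi$, an event of probability one (block exploration plus ergodicity), and then asserts that ``because the underlying event is the almost sure occurrence of a robust block structure that serves every target simultaneously, the conclusion holds for the whole uncountable family $\Xi$ at once.'' But no structure serving all targets is ever constructed in your sketch: the blocks you explore, the sites you retain as active, and the offspring you count all depend on the symbols of the particular $\xi$ being matched. Since $\Xi$ is uncountable, the intersection of these per-word probability-one events cannot be recovered by a union bound, by ergodicity, or by Fubini --- the latter yields only that $\nu_\mu$-almost every word percolates. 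This is not a technical footnote but the entire content of the theorem: as the paper itself stresses around inequality \eqref{W}, the per-word (hence $\nu_\mu$-a.e.) statement is already known on $\mathbb{Z}^d$, $d \geq 3$, for every $p \in (p_c, 1-p_c)$, and the question of seeing \emph{all} words simultaneously is precisely what remains conjectural there in low dimension. Any correct proof must be quantitative and uniform over words: in \cite{Benjamini_Kesten} the growth of the collection of endpoints of partial embeddings is controlled with failure probabilities small enough to be union-bounded over the $2^n$ words of each finite length $n$ (this is where second-moment estimates on pairs of intersecting oriented paths, and hence the condition $d \geq 10$, enter), and infinite words are then recovered by compactness, i.e.\ K\"onig's lemma applied to the finitely branching tree of partial embeddings. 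None of this uniformity appears in your proposal.

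Two smaller points. Your heuristic that the branching ``mean offspring \ldots exceeds $1$ exactly when $d$ passes the stated threshold'' cannot be the correct explanation of the constants: the expected number of oriented paths of length $n$ from a fixed vertex matching a given word is $(d/2)^n$, which is already supercritical for $d \geq 3$; the thresholds $10$ and $40$ come out of the second-moment (path-intersection) computations that your sketch defers rather than performs. By contrast, your binary-tree device for $d \geq 40$ is the right kind of object --- a single structure from which every word can be read off does resolve the uncountability issue --- but its construction again rests on the unproved claim that overlapping forward cones behave as if independent, which is exactly where the work lies.
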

\medskip

\noindent They also conjectured \cite [Open problem 2, p. 1029]{Benjamini_Kesten} 
that {\eqref{all}} should hold for {all} $d \geq 3$.
\medskip

\noindent The main result of this work makes a step forward in the search for an
affirmative answer {to the question of Benjamini and Kesten}.

\noindent For  $\xi=(\xi_1,\xi_2,\dots ) \in \Xi$, we
define
$I(\xi): =\{i \geq 1 : \xi_i\neq\xi_{i+1}\} .$  Set
$i_0 (\xi):=0,$ and $ i_{j+1}(\xi):= \inf  \big( I(\xi) \setminus \{ i_0 (\xi),\dots,i_j (\xi) \} \big)$.
Let
$$r_j (\xi) := i_{j+1}(\xi) - i_j(\xi).$$
This is well-defined {as soon as} $|I(\xi)| > j$. If $I(\xi)= \{i_1(\xi),  ... , i_j (\xi)\},$ 
for some $j < + \infty$, we {set $r_{j+k} (\xi) := + \infty$ for all $k \geq 1$}. When  
$|I(\xi)| < +\infty$,  we say that $\xi$ is \emph{ultimately monochromatic}, 
and denote {$\Xi_{\textrm{um}} := \{ \xi: \; \xi \text{ is ultimately monochromatic}\}$}.

\begin{definition}
 If there exists $M \geq 1$ such that $r_j({\xi}) \geq M$ for all $j \geq 1$, we say that $\xi$
 is \emph{$M$-stretched}. For a given $M$, we denote 
 $\Xi_M := \{ \xi: \xi \; {\rm{is}} \; M\text{-stretched}\}$.
 \end{definition}

Let $p_c (d)$ be the critical
threshold for  site percolation on $\mathbb{Z}^d$, which for $d \geq 3$ is strictly
smaller than $1/2$ (see \cite{Campanino_Russo}).  We are ready to state our
main result:
\begin{theorem}
\label{thm:words} Let $d\geq 3$. Consider site percolation on $\mathbb{Z}^d$
with parameter $p\in (p_c (d), 1-p_c(d))$. There exists
$M(p)$ for which:
$$\mathbb{P}_p\big(\Xi_{M(p)} \subset S_\infty \big)=1.$$
\end{theorem}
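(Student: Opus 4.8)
The plan is to prove the statement by a renormalization (block) argument that exploits the simultaneous supercriticality of both phases: since $p>p_c(d)$ the sites carrying the symbol $1$ percolate, and since $p<1-p_c(d)$ the sites carrying $0$ percolate as well. The heart of the matter is to build, inside the configuration, a single macroscopic structure along which an arbitrary $M$-stretched word can be traced; the parameter $M$ will be fixed as a large multiple of the block scale $L$, which is itself chosen large once and for all at the end.

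First I would set up the coarse-graining. Partition $\mathbb{Z}^d$ into boxes $B_z$ of side $L$ indexed by $z\in\mathbb{Z}^d$ (working, if convenient, inside a fixed slab and reserving the remaining coordinate(s) as ``room'' to let the two colors coexist --- this is the point where $d\geq 3$ is essential, since in $d=2$ duality forbids both colors from crossing the same box). Declare $B_z$ \emph{good} when it carries the geometric features we need: a self-avoiding crossing cluster of $1$'s and a self-avoiding crossing cluster of $0$'s between two prescribed opposite faces, each of these crossings matching up (say, through a fixed overlap region, which keeps goodness finite-range) with the crossing of the same color of every good neighbor $B_{z'}$, and at least one \emph{interchange}, i.e.\ a pair of adjacent sites, one in the $1$-crossing and one in the $0$-crossing, allowing the embedded path to switch color in a single step. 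Since in the supercritical phase the box-crossing probability tends to $1$ as $L\to\infty$, and this holds for each required event separately (for both colors, and for the interchange), a union bound on complements shows that $\mathbb{P}_p(B_z\text{ good})\to 1$ as $L\to\infty$.

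Next I would pass to the renormalized lattice. Because goodness depends only on $\omega$ in a bounded neighborhood of $B_z$, the field $(\ind\{B_z\text{ good}\})_z$ is finite-range dependent with marginal close to $1$; by the Liggett--Schonmann--Stacey domination theorem it stochastically dominates a highly supercritical Bernoulli field, so a.s.\ the good boxes contain an infinite self-avoiding path $B_{z_0},B_{z_1},\dots$ of boxes. This fixed macroscopic path serves all words at once. Given any $\xi\in\Xi_M$, I would trace the embedding block by block: while $\xi$ stays in a single symbol, route the path through the monochromatic crossing of that color in each successive good box (joining up across shared faces by the compatibility condition), and at a color change use the interchange of the current box to cross to the other color in one step. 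Self-avoidance of the macroscopic path guarantees that boxes are never revisited, so global self-avoidance follows from the self-avoidance of the crossing inside each box.

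The two delicate points, and where I expect the real work to lie, are the construction of the good box and the exact matching of run lengths. For the latter, one must show with high probability not merely that both colors cross and that an interchange exists, but that the crossings are ``fat'' enough to be traversed by self-avoiding paths of every length in a wide interval; then, by a making-change argument spread over several boxes, each run of prescribed length $r_j\geq M$ can be realized \emph{exactly}, which forces the choice $M\geq cL$. The bipartiteness of $\mathbb{Z}^d$ imposes a parity constraint on self-avoiding paths with fixed endpoints, and I would absorb it by using the freedom in the location of the interchange, since shifting the switch site adjusts the relevant parity. Establishing these structural properties of good boxes --- existence of an interchange together with length-flexible, mutually compatible crossings --- with probability tending to $1$, so that one macroscopic path accommodates the uncountable family $\Xi_{M}$ simultaneously, is the main obstacle; the domination step and the word-tracing are then comparatively routine.
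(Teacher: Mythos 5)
Your overall architecture is the same as the paper's: coarse-grain $\mathbb{Z}^3$ into blocks, call a block good when it contains macroscopic crossings of both colors together with a color-switch structure, use Liggett--Schonmann--Stacey domination of the resulting finite-range-dependent field to extract an infinite (in the paper, oriented) path of good blocks, and then trace any $M$-stretched word along this path, matching run lengths exactly by a making-change argument and fixing parity by a local adjustment. However, there is a genuine gap at exactly the point you defer: your claim that the ``interchange'' event (an adjacent pair of sites, one in the $1$-crossing and one in the $0$-crossing) has probability tending to $1$ ``by a union bound'' on crossing-type events. This event is neither increasing nor decreasing, so neither FKG nor standard supercritical crossing estimates apply to it; moreover, two transversal crossings of opposite colors in a three-dimensional box need not come within distance one of each other, and even adjacency of the two giant clusters is not automatic (the $0$-sites on the boundary of the giant $1$-cluster could a priori all lie in small $0$-clusters). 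This coexistence-at-adjacent-sites statement is precisely where the paper does its real work: it prescribes an explicit local pattern of four $1$s and four $0$s (the ``elementary outlet''), attaches four connecting paths confined to four pairwise disjoint boxes --- so that the two increasing and the two decreasing path events are mutually independent and independent of the prescribed pattern --- obtains positive probability for each path from the quarter-space percolation theorem of Barsky--Grimmett--Newman, and glues with FKG and Penrose--Pisztora uniqueness events (Lemmas 2 and 3 of the paper); probability tending to $1$ is then obtained from many disjoint translates of this structure inside the block (Lemma 4). Without some substitute for this construction, your good-block definition cannot be shown to have probability tending to $1$.

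A second, lesser gap: you assert that the crossings of a good block are ``length-flexible'', i.e.\ traversable by self-avoiding paths of every length in a wide interval, and on that basis claim $M$ of order $L$. This property is nowhere established and would itself require substantial work. The paper circumvents it: between consecutive outlets it uses one fixed connecting path of length at most $\ell$ (with $\ell$ of order $L^3$), and the only flexibility is a detour of exactly two vertices available at each intermediary outlet, plus a one-vertex parity correction; matching a run of length $r \geq M_0$ exactly then requires the run to span of order $\ell$ outlets, which is why the paper takes $M_0 = \ell^2$ rather than a constant multiple of the block scale. Your parity fix (shifting the switch site) is in the same spirit as the paper's (appending one extra outlet vertex) and is fine, but the exact-length bookkeeping must rest on a crude, provable mechanism of this kind rather than on the unproved length-flexibility of crossings; since the theorem only asks for some finite $M(p)$, nothing is lost by the larger value.
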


\medskip

\noindent {\emph{Remark 1.}} Since $p\in (p_c (d), 1-p_c(d))$, it implies that all
ultimately monochromatic sequences can be embedded almost surely, thus
$$\mathbb{P}_p\big((\Xi_{M(p)} \cup \Xi_{\textrm{um}}) \subset S_\infty \big)=1.$$

\noindent {\emph{Remark 2.}} The following construction shows that for site
percolation on $\mathbb{Z}^d$ with parameter $p=1/2$, the value $M=2$ can be
achieved as soon as $p_c(d-1)<1/4$. For that, consider the subset
$\mathbb{Z}^{d-1} \times \{0,1\}$ of $\mathbb{Z}^d$, and call a vertex
$v = (v_1,\ldots,v_{d-1}) \in \mathbb{Z}^{d-1}$ \emph{good} if
$v' := (v_1,\ldots,v_{d-1},0)$ and $v'':=(v_1,\ldots,v_{d-1},1)$ satisfy 
$\omega_{v'} =0$ and $\omega_{v''} = 1$.
Hence, vertices of $\mathbb{Z}^{d-1}$ are good with probability $1/4$
each, independently of each other. If $p_c(d-1) < 1/4$, then there exists
a.s.\ {an infinite self-avoiding path $\gamma$ of good vertices in $\mathbb{Z}^{d-1}$. 
Then, in the subset of $\mathbb{Z}^{d-1}\times\{0,1\}$ with projection $\gamma$ on $\mathbb{Z}^ {d-1}$,
one can embed any $2$-stretched binary sequence}.
Numerical simulations in \cite{Grassberger} suggest that this happens for $d \geq 5$
(they provide $p_c(4) = 0.19688\ldots$).

\medskip

\noindent {\bf{1.2. Comments and conjectures.}}  It seems that Dekking \cite{D} 
was the first to consider the question of wether $S(v)$ is equal to $\Xi$ in 
the context of {percolation on} regular trees. Benjamini and Kesten \cite{Benjamini_Kesten}
investigated  this problem in a general setup under the name
\emph{percolation of words}, and considered the case $p=1/2$. 
For motivation, historical account and some related works, see their paper and 
the references therein. {Besides the question which motivated our present work, 
another question discussed in  \cite{Benjamini_Kesten} was: what happens
for low-dimensional graphs? In particular, an interesting case is when the value $1/2$ 
is the critical parameter for site percolation as, for example, 
on the triangular lattice $\cal{T}$. Since in this case, a.s.\ neither open nor closed
infinite clusters exist, some sequences cannot be embedded,
and therefore $\mathbb{P}_p(S_\infty = \Xi) = 0$.
Thus, one may ask how rich the set of binary sequences which can be 
embedded is}. Even if one cannot embed all the sequences, it 
is possible that $S_\infty$ consists of ``almost all'' sequences in the following 
sense: let {$\nu_\mu = \otimes_{i=1}^{\infty} \nu_\mu^i$} be the
Bernoulli product measure with parameter $\mu$ on the set of binary sequences
$\Xi$, i.e.\ $\nu_\mu^j(\xi_j =1) = \mu, \; j=1,2, \dots $. 
For a rather general class 
of graphs, and in particular on $\mathbb{Z}^d$, for each $\xi \in \Xi$,
\begin{equation}
\rho (\xi) := \mathbb{P}_p ( \xi {\text{ can be embedded from some $v$}} ) = 
0 \; \rm{or} \; 1.
\end{equation}
We will say that $\xi$ \emph{percolates} if $\rho(\xi) = 1$. Moreover, see  
\cite [(1.12)]{Benjamini_Kesten},
\begin{equation}
\nu_\mu ( \{\xi: \rho(\xi) = 1  \}) = 0 \; \rm{or} \; 1.
\end{equation}
In the former (latter) case we say that almost no sequence (almost all 
sequences, respectively) can be embedded. In 
\cite{Kesten_Sidoravicius_Zhang1} it was shown
that in the case of the triangular lattice $\cal{T}$ and $p=1/2$,
almost all sequences can be embedded
regardless of the value $0<\mu <1$.

\medskip

\noindent Returning to our original question of embeddings on 
{$\mathbb{Z}^d$}, observe that the monochromatic sequences 
$\underline{\mathbf{0}}:=(0,0,\dots)$ and
$\underline{\mathbf{1}} := (1,1,\dots)$ are the least likely to percolate,
in the sense that for any $\xi \in \Xi$ and any $v$,
\begin{align}
\label{W}
    \mathbb{P}_p\Big(\text{$\xi$ can be } &\text{embedded in $\omega$ 
    starting from }v\Big) \notag\\
   & \geq \min_{\zeta \in \{\underline{\mathbf{0}}, \underline{\mathbf{1}}\}} 
   \mathbb{P}_p\Big(\zeta \mbox{ can be embedded in }
   \omega \text{ starting from $v$}\Big),
\end{align}
which follows from \cite [Prop.\ 3.1]{Wierman},  see also
\cite [Lemma 2]{deLima}.
Inequality (\ref{W}) immediately implies that on $\mathbb{Z}^d$, $d \geq 3$,  
for $p\in (p_c (d), 1-p_c(d))$, almost all binary
sequences can be embedded almost surely. Though, the  a.s.\ simultaneous 
occurrence of  $\underline{\mathbf{0}}:=(0,0,\dots)$ and 
$\underline{\mathbf{1}} := (1,1,\dots)$
strongly supports the idea that all binary sequences 
can be embedded, it still remains far from being understood and 
settled. Besides Theorem \ref{BK10} in
\cite{Benjamini_Kesten} mentioned above, for $\mathbb{Z}^d, d\geq 10$, the only low-dimensional result  was obtained in \cite{Kesten_Sidoravicius_Zhang2}, 
where it was shown that 
$\mathbb{P}_{p}(S_\infty=\Xi)=1$ if $p\in (p_c (d), 1-p_c(d))$
 for  $\mathbb{Z}^2_{cp}$ -- the close-packed graph of $\mathbb{Z}^2$, 
 that is, the graph obtained by adding to each face of 
 $\mathbb{Z}^2$ the two diagonal edges.
 
\medskip
\noindent  {\bf Conjecture and open problems.} The following classification conjecture
was stated by two of the authors\footnote{B.N.B.L. and V.S.}:

\medskip

\noindent  Let $\mathcal{G}$ be {an 
infinite graph, with uniformly bounded degree, and $p_c^{\mathcal{G}}$ 
denote its critical threshold for site percolation}.
\begin{align*}
{\text{I. If }} p_c^{\mathcal{G}}&> 1/2, {\text{then}}\\
a) \quad &{\text{For $p \in (0, 1 - p_c^{\mathcal{G}}] \cup [p_c^{\mathcal{G}}, 1)$, 
there exists  $0 <  \mu_c(p) < 1$, such  that }}\\
&{\text{for $p \leq 1 - p_c^{\mathcal{G}}$ almost all binary sequences can be 
 embedded  if  }}\\ 
&{\text{$\mu \leq \mu_c (p)$ and almost no of binary sequences can be embedded }}\\
&{\text{if $\mu_c(p) < \mu$. Similar holds for 
$p_c^{\mathcal{G}} \leq p$: almost all sequences can}}\\
&{\text{be embedded if  $\mu_c (p) \leq \mu$, and almost no sequences can be}}\\
&{\text{embedded if $\mu < \mu_c(p).$}}\\
b) \quad  &{\text{If  $p \in  (1 - p_c^{\mathcal{G}}, p_c^{\mathcal{G}})$, 
then for any $\mu$, almost no sequences can be}}\\
&{\text{embedded.}}\\
{\text{II. If }} p_c^{\mathcal{G}} &\leq 1/2, {\text{then}} \\
a) \quad  &{\text{For $p \in (0, p_c^{\mathcal{G}}) \cup (1 - p_c^{\mathcal{G}}, 1)$, 
there exists 
$0 <  \mu_c(p) < 1$, such  that }}\\
&{\text{almost all binary sequences can be 
 embedded  for  $\mu \leq \mu_c (p)$ if}}\\
&{\text{$p \leq p_c^{\mathcal{G}}$, and for $\mu_c(p) \leq \mu$ if 
$1- p_c^{\mathcal{G}} \leq p$. Respectively, almost no}}\\
&{\text{sequences can embedded if $\mu > \mu_c(p) $ or $\mu < \mu_c(p).$}}\\
b) \quad &{\text{If $p = p_c^{\mathcal{G}}$
or $p = 1 - p_c^{\mathcal{G}}$, then almost all sequences can be embedded}}\\
&{\text{for all $0<\mu<1$.}}\\
c)\quad &{\text{If
$ 1 - p_c^{\mathcal{G}} < p < p_c^{\mathcal{G}}$, then all sequences
can be embedded.}}
\end{align*} 

\noindent Cases $a)$ of I and II are similar. For the case $p_c^{\mathcal{G}} \leq p$  of I $a)$ 
or  $1 -p_c^{\mathcal{G}} \leq p$ of II $a)$, a Peierls' type argument shows 
that $0 < \mu_c$. To obtain $\mu_c < 1$ is more difficult due to the multi-scale
nature of the problem, and requires elaborated tools. It is a corollary of the
main Theorem 1 of \cite{KSV}. Problems of similar nature are treated in \cite{BS} 
and \cite{KLSV}.
\medskip

\noindent \emph{Open problem 1.} Does I $b)$ hold under these general hypotheses on $\mathcal{G}$, or are some additional assumptions required?
\medskip

\noindent \emph{Open problem 2.} Establish II $b)$. In  \cite{Kesten_Sidoravicius_Zhang1},
it was established for the triangular lattice. However, their proof heavily uses geometric
properties of $\cal{T}$. It would be interesting to establish II $b)$ for 
percolation on the Voronoi tessellation, or for site percolation on a ``random'' triangular 
lattice -- the graph obtained from  $\mathbb{Z}^2$ by adding to its faces only 
one, {randomly chosen diagonal (with angle $\pm \pi/4$)}. It is known (\cite{BR} and 
\cite{R} respectively) that for these graphs the site percolation threshold equals $1/2$.  
The methods of \cite{Kesten_Sidoravicius_Zhang1} do not apply for these models.
\medskip

\section{Proof of Theorem \ref{thm:words}}

\subsection{Preliminaries}

From now on, we restrict ourselves to the case where $d=3$, 
but the same proofs apply to higher dimensions, up to 
minor modifications. The main idea of the proof is to construct 
a pair of self-avoiding $0$- and $1$-paths that approach each 
other with a certain regularity in some particular structures, that we call \emph{outlets}.
This pair of paths is represented schematically in Figure \ref{double_path}.

\begin{figure}
\begin{center}
\includegraphics[width=11cm]{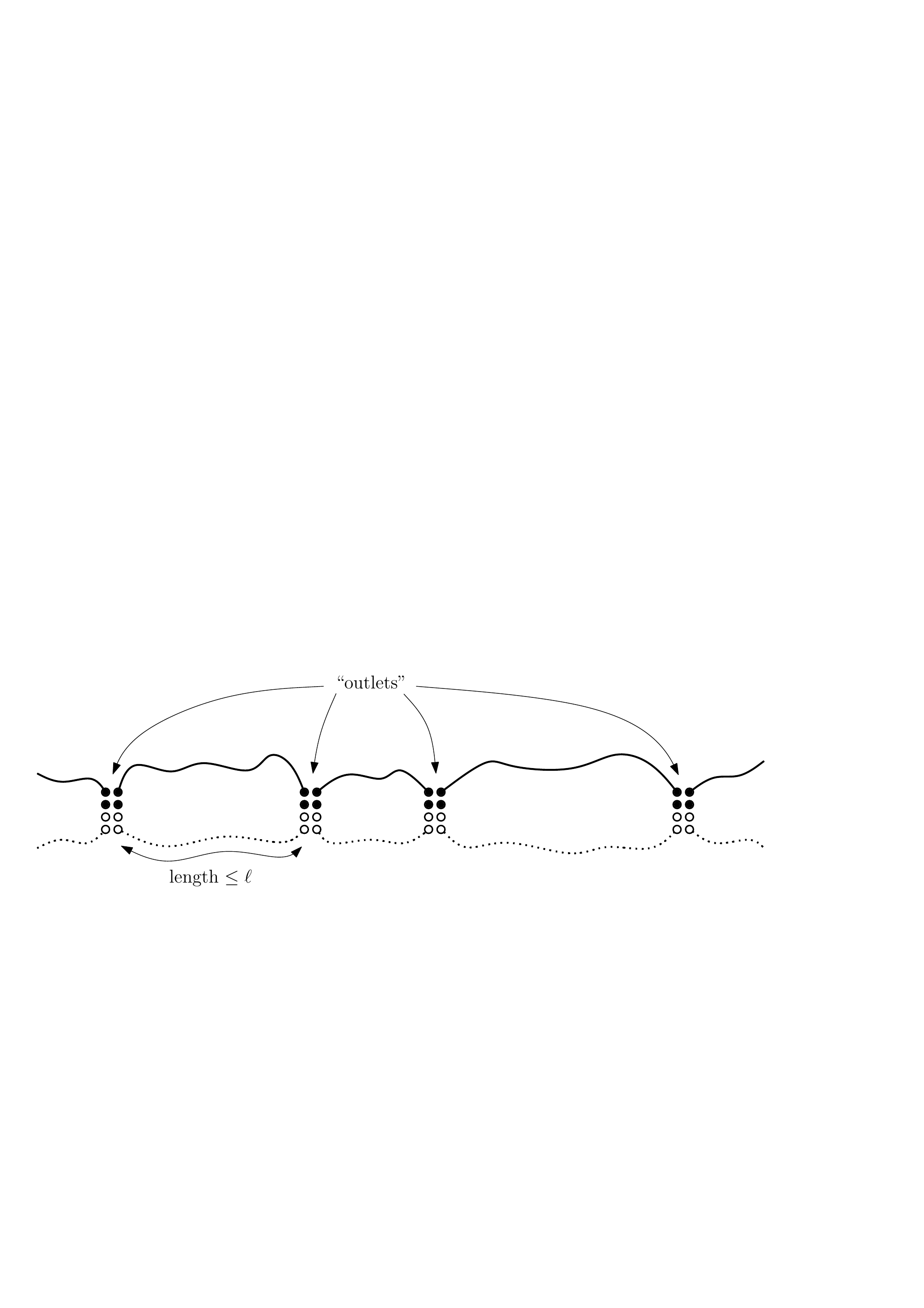}

\caption{\label{double_path} \rm{We construct a double path as 
depicted. The successive ``outlets'' will allow one to switch from a 
$0$-path to a $1$-path, while providing some flexibility at the same 
time. We require all paths connecting successive outlets to have 
length at most $\ell$ (this will allow us to take $M_0 = \ell^2$).}}
\end{center}
\end{figure}

Let us now start the construction.
Considering the box $R = [0,l^{(1)}] \times [0,l^{(2)}] \times [0,l^{(3)}]$, we define
\begin{align}
\mathcal{C}(R) := & \{\text{there exists a 1-path inside $R$, connecting its left side} \nonumber \\
&  [0,l^{(1)}] \times \{0\} \times [0,l^{(3)}] \text{ to its right side }  [0,l^{(1)}] \times \{l^{(2)}\} \times [0,l^{(3)}] \},
\end{align}
and also
\begin{equation}
V^1_t (R) := {U^1_t(R)}^c,
\end{equation}
where
\begin{align}
U^1_t(R) := & \{\text{there exist two {disjoint} connected 1-subsets of $R$, both}  \nonumber \\
& \text{with diameter at least $t$, which are not connected} \nonumber\\
& \text{by a 1-path lying entirely in $R$} \}.
\end{align}
We will also use the notation $V^0_t$ for the event obtained by replacing 1-vertices by 0-vertices in the definition of $V^1_t$.

The following lemma will be used repeatedly.
\begin{lemma}
There exist constants $c_1 = c_1 (p) > 0$ and $c_2 = c_2(p)>0$ such that, for all $N \geq 1$ and all $l^{(j)}_N \in [N, 10 N]$ ($j= 1,2,3$), we have:
\begin{equation} \label{fact1}
\mathbb{P}_p(\mathcal{C}(R)) \geq 1 - c_1 e^{-c_2 N},
\end{equation}
and
\begin{equation} \label{fact2}
\mathbb{P}_p(V^k_N(R)) \geq 1 - c_1 e^{-c_2 N} \quad (k=0,1),
\end{equation}
where $R = [0,l^{(1)}_N] \times [0,l^{(2)}_N] \times [0,l^{(3)}_N]$.
\end{lemma}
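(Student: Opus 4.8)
The plan is to prove the two estimates \eqref{fact1} and \eqref{fact2} as consequences of the supercritical behaviour of site percolation on $\mathbb{Z}^d$ (here $d=3$) when $p \in (p_c(d), 1-p_c(d))$. The key point is that $p > p_c(d)$ forces the $1$-phase to be supercritical, so crossings of boxes of side comparable to $N$ occur with probability exponentially close to $1$; symmetrically, since $1-p > p_c(d)$, i.e.\ $p < 1-p_c(d)$, the $0$-phase is \emph{also} supercritical, which is what lets us state \eqref{fact2} for both $k=0$ and $k=1$. First I would record the two standard inputs from supercritical percolation on $\mathbb{Z}^d$, $d\geq 3$, phrased for finite boxes: (i) an exponential lower bound on the probability that a box of side length $\asymp N$ is crossed in a prescribed direction by an open ($1$-)path, and (ii) exponential control on the geometry of the open cluster inside such a box, to the effect that two open subsets of large diameter are, with very high probability, joined inside the box. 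These are precisely the events $\mathcal{C}(R)$ and $V^1_N(R)$ being defined, so the lemma is really a packaging of these two facts with uniform constants over the allowed range of aspect ratios.

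\smallskip

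For \eqref{fact1}, I would use the standard renormalization/block argument for supercritical site percolation. Tile $R$ by mesoscopic blocks and declare a block \emph{good} if it contains a suitably crossing open cluster that connects to the analogous clusters in its neighbouring blocks; by the finite-size criterion for supercritical percolation (Antal--Pisztora or Grimmett--Marstrand, whichever is cited in the sequel), the block process stochastically dominates a highly supercritical Bernoulli field once the block scale is fixed large enough depending only on $p$. A left-to-right crossing of $R$ by good blocks, which exists except on an event of probability $\leq c_1 e^{-c_2 N}$ by a Peierls-type estimate on the dual cut-set (the cut has $\gtrsim N$ blocks, each bad with a small probability, and there are at most exponentially many such cuts), then yields the desired $1$-path crossing from the left face to the right face. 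The uniformity over $l^{(j)}_N \in [N,10N]$ is immediate since the number of blocks in each direction is $\Theta(N)$ with constants depending only on the fixed block scale.

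\smallskip

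For \eqref{fact2} I would argue by contradiction with the \emph{non}-occurrence of $V^1_N(R) = U^1_N(R)^c$: if $U^1_N(R)$ holds there are two disjoint $1$-clusters of diameter $\geq N$ inside $R$ that are not joined by a $1$-path in $R$. Using the same renormalized good-block field, each such large cluster must, with overwhelming probability, be contained in the unique ``giant'' crossing cluster of good blocks in $R$; the probability that a cluster of diameter $\geq N$ fails to merge into this giant component is again $\leq c_1 e^{-c_2 N}$ by a surface-order (dual contour) estimate. Hence two such large clusters are both attached to the giant cluster and therefore connected to each other within $R$, contradicting $U^1_N(R)$. The case $k=0$ is identical after exchanging the roles of $0$ and $1$, which is legitimate precisely because $p \in (p_c(d), 1-p_c(d))$ makes both phases supercritical. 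I expect the main obstacle to be step (ii)/\eqref{fact2}: getting the uniqueness-of-the-large-cluster statement at finite volume with an \emph{exponential} (rather than merely stretched-exponential) rate and with constants uniform over the aspect-ratio window requires the finite-size renormalization to be set up carefully, so that both the crossing estimate and the no-isolated-large-cluster estimate come out of the same good-block construction; I would therefore fix the block scale once, depending only on $p$, and derive both \eqref{fact1} and \eqref{fact2} from the single statement that bad blocks are sparse.
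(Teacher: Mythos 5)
Your proposal is correct in outline, but it takes a genuinely different (and far more laborious) route than the paper: the paper's entire proof of this lemma is a citation. Both \eqref{fact1} and \eqref{fact2} are invoked as direct consequences of Theorem~5 of Penrose and Pisztora \cite{Penrose_Pisztora}, which provides exactly these two estimates --- box crossing, and uniqueness of large clusters inside a box --- with exponential rates for supercritical percolation; the only remaining work, adapting the statement from hypercubes to boxes with sides in $[N,10N]$, is dispatched by ``standard gluing arguments''. What you propose is to re-derive both estimates from scratch via coarse-graining (finite-size criterion \`a la Grimmett--Marstrand/Antal--Pisztora, Liggett--Schonmann--Stacey domination, Peierls counting of bad blocks), which is, in substance, how the cited theorem is itself proved; your version buys self-containedness and treats the aspect ratios $l^{(j)}_N \in [N,10N]$ natively in one construction, while the paper's buys brevity. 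Two technical remarks on your sketch. First, your Peierls count for \eqref{fact1} asserts the blocking cut has $\gtrsim N$ blocks; in $d=3$ a left--right blocking surface actually has $\gtrsim N^2$ blocks, so your weaker bound is harmless and the $e^{-c_2 N}$ rate follows a fortiori. Second, and more seriously, in your argument for \eqref{fact2} the phrase ``the probability that a cluster of diameter $\geq N$ fails to merge into the giant component is $\leq c_1 e^{-c_2 N}$'' cannot be applied cluster by cluster, because the clusters are random objects rather than fixed sets; the estimate must be organized as a union bound over deterministic witnesses of failure (a lattice animal of $\gtrsim N$ blocks, all bad, traversed by the cluster, or a $*$-connected separating surface of bad blocks), which your mention of a ``dual contour estimate'' correctly gestures at but does not spell out. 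That is precisely the delicate point you flagged at the end, and making it rigorous is the actual content of the theorem the paper cites, so as a blind proof plan your proposal is sound but would need that step carried out in full to stand on its own.
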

These two properties follow from Theorem 5 in \cite{Penrose_Pisztora}. Strictly speaking, they are stated for hypercubes in that paper, but they can be adapted to the case of more general boxes, of the form considered here, by standard gluing arguments.

\subsection{Construction of outlets}

In order to make our arguments more symmetric, we also consider the shifted lattice $\mathbb{Z}^3_{*} = \mathbb{Z}^3 + (0, 1/2, 1/2)$.
We further denote by $\omega \in \{0,1\}^{\mathbb{Z}^3_*}$ a generic $0$ and $1$ site percolation configuration.
We start with a lemma.

Denote by $\Gamma(R_L)$ the event that there exists a 1-path $\gamma$ that stays inside $R_L = [(-L,L) \times (0,8L) \times (0,2L)] \cap \Zst$ and connects $(0, 1/2, 1/2)$ to the right side of $R_L$ (see Figure \ref{one_connection}).

\begin{figure}
\begin{center}
\includegraphics[width=7cm]{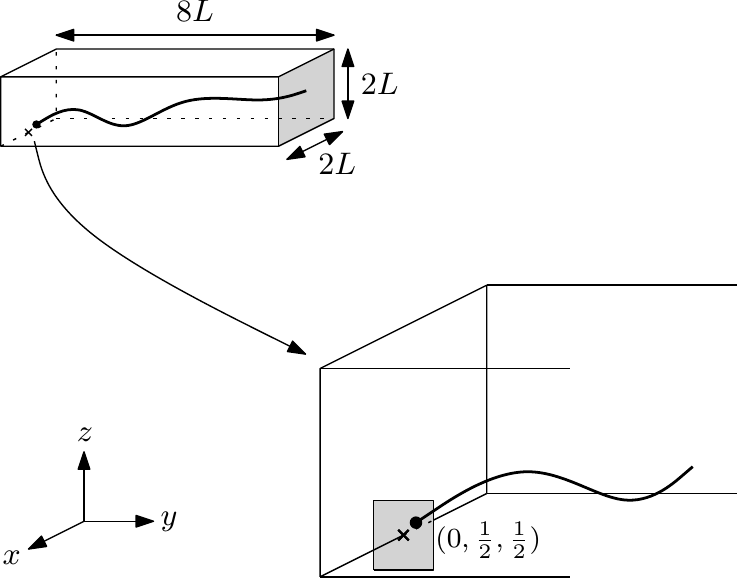}
\caption{\label{one_connection} {\rm{The connecting path from Lemma \ref{lemma:one_connection}.}}}
\end{center}
\end{figure}

\begin{lemma} \label{lemma:one_connection}
 There exist $\delta_0>0$ and $L_0 \geq 1$, depending only on $p$, such that for all $L \geq L_0$,
$$\mathbb{P}_p(\Gamma(R_L)) \geq \delta_0.$$
\end{lemma}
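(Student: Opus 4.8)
The plan is to establish the existence of a connecting 1-path with probability bounded below uniformly in $L$, by combining the crossing and connectivity estimates of the first Lemma (inequalities~\eqref{fact1} and \eqref{fact2}) with a positive-probability event that anchors the path at the specific vertex $(0,1/2,1/2)$. Let me sketch the key steps.

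First I would tile the box $R_L$ by a bounded number of overlapping sub-boxes, each with all side-lengths comparable to $L$ (i.e.\ in $[N,10N]$ for a suitable $N$ of order $L$), arranged so that a left-to-right traversal of $R_L$ is guaranteed whenever each sub-box is crossed in the appropriate direction and consecutive crossings are forced to meet. Applying \eqref{fact1} to each of these $O(1)$ sub-boxes gives, by a union bound, that all of them are crossed simultaneously with probability at least $1 - c_1' e^{-c_2' L}$. The role of \eqref{fact2}, the event $V^1_N(R)$, is crucial here: it guarantees that within each sub-box any two 1-clusters of diameter at least $N$ are in fact connected by a 1-path inside the box, so that the crossings of overlapping sub-boxes, which each have large diameter, genuinely concatenate into a single connected 1-path spanning $R_L$ from left to right. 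Thus with probability bounded below (in fact tending to $1$) there exists a 1-path traversing $R_L$ in the long ($8L$) direction.

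The remaining issue is anchoring this path to the prescribed starting vertex $v_0=(0,1/2,1/2)$, since a uniformly positive probability, not merely a high-probability event, is what the statement asserts, and $v_0$ is a single fixed site which is open (carries a $1$) only with probability $p$. I would therefore condition on, or intersect with, the event that a small bounded neighborhood of $v_0$ contains a 1-path joining $v_0$ to a 1-cluster of diameter at least $N$ that the spanning path is forced to contain; this local event has probability bounded below by some constant $\delta'>0$ depending only on $p$, independently of $L$, because it involves only finitely many sites near the origin. Combining the local anchoring event (probability $\geq \delta'$) with the global crossing event (probability $\geq 1 - c_1' e^{-c_2' L}$, hence $\geq 1/2$ for $L$ large), and using the FKG inequality---all events here are increasing in the 1-configuration---yields
\begin{equation*}
\mathbb{P}_p(\Gamma(R_L)) \;\geq\; \delta' \cdot \tfrac{1}{2} \;=:\; \delta_0 > 0
\end{equation*}
for all $L \geq L_0$, which is the claim.

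The main obstacle I anticipate is the geometric bookkeeping in the second step: ensuring that the local 1-path near $v_0$ actually connects into the spanning crossing, rather than merely existing in isolation. This is exactly what $V^1_N$ is designed to handle---it upgrades ``two large 1-clusters coexist'' into ``they are joined inside the box''---so the real work is to arrange the sub-box tiling so that the cluster reached from $v_0$ and each successive crossing cluster both have diameter at least $N$ and both lie in a common sub-box, allowing $V^1_N$ to merge them. Once the tiling is set up so that every required concatenation takes place inside some sub-box on whose $V^1_N$ event we are conditioning, the FKG step is routine, since crossings, connection events, and the local anchoring event are all increasing. I would keep the number of sub-boxes fixed independent of $L$ so that the union bound in the first step remains harmless.
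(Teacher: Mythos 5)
Your high-level architecture (a global left--right crossing from \eqref{fact1}, a merging step via \eqref{fact2}, and FKG to combine a probability-$\geq\delta'$ anchor event with a probability-$\geq 1/2$ crossing event) is the same as the paper's, and your first step is even more elaborate than necessary: since the sides of $R_L$ are $2L$, $8L$, $2L$, inequality \eqref{fact1} applies to $R_L$ directly, with no tiling needed. The genuine gap is in your anchoring step. You claim there is an event, measurable with respect to a \emph{bounded} neighborhood of $v_0=(0,1/2,1/2)$ and hence of probability bounded below independently of $L$, which joins $v_0$ to a 1-cluster of diameter at least $N$ (with $N$ of order $L$). No such event exists: any connection certified inside a window of bounded size has bounded diameter, so it can never meet the diameter-$N$ threshold that $V^1_N$ requires in order to merge the cluster of $v_0$ with the crossing cluster. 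Nor can you lower the threshold: the bound \eqref{fact2} is only available when the diameter parameter is comparable to the box size (for fixed $t$ and box of side $\sim L$, the event $V^1_t$ in fact fails with high probability, since far-apart isolated 1-clusters of diameter $t$ occur). The alternative reading of your sentence --- that the local path joins $v_0$ directly to the spanning crossing --- also fails, because a crossing of $R_L$ in the long direction is not forced to pass within bounded distance of $v_0$, so that event is not local and its probability is not obviously bounded below.

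What is missing is precisely the non-local input the paper uses: the vertex $v_0$ sits at a \emph{corner} of $R_L$ (within distance $1/2$ of two faces), so the statement that $v_0$ connects inside $R_L$ to distance $L-1$ with probability bounded below, uniformly in $L$, is equivalent to percolation from the corner of a quarter space $\mathbb{Q}=\mathbb{Z}\times\mathbb{Z}_+\times\mathbb{Z}_+$. That this holds for every $p>p_c(\mathbb{Z}^3)$ (and not merely for $p$ above some possibly larger quarter-space threshold) is the theorem of Barsky, Grimmett and Newman \cite{Barsky_Grimmett_Newman} that the quarter-space critical point equals $p_c^{\textrm{site}}(\mathbb{Z}^3)$, giving $\theta^{\mathbb{Q}}(p)>0$; this is essential here because $p$ may be arbitrarily close to $p_c$. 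Once you replace your ``local anchoring event'' by the event that $v_0$ lies in a 1-cluster of diameter $\geq L-1$ inside $R_L$ (probability $\geq\theta^{\mathbb{Q}}(p)$ by \cite{Barsky_Grimmett_Newman}), the rest of your outline is exactly the paper's proof: FKG gives both this event and the \eqref{fact1}-crossing with probability $\geq\theta^{\mathbb{Q}}(p)/2$, and \eqref{fact2} with threshold $L$ merges the two clusters, both now genuinely of diameter of order $L$.
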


\begin{proof}
We know from \cite{Barsky_Grimmett_Newman} that the critical threshold in a quarter space $\mathbb{Q} := \mathbb{Z} \times \mathbb{Z}_+\times \mathbb{Z}_+$ coincides with $p_c^{\textrm{site}}(\mathbb{Z}^3)$, which implies that with probability at least $\tilde{\delta}_0 = \theta^{\mathbb{Q}}(p)>0$, the vertex $(0, 1/2, 1/2)$ {is connected  to a site $v \in \mathbb{Z}^ 3_*$ with $\|v\|_{\infty} \geq L-1$ by a 1-path $\gamma$ lying inside $R_L$}.
On the other hand, it follows from \eqref{fact1} that there exists a left-right crossing $\tilde{\gamma}$ of $R_L$ with probability at least $1/2$ (for $L$ large enough).
The FKG inequality implies that with probability at least $\tilde{\delta}_0/2$, both paths exist, and these two connected 1-sets can be combined with the help of \eqref{fact2}:  {they are connected by a 1-path staying in $R_L$ with probability arbitrarily close to one (by taking $L$ large enough), which creates a path from $(0, 1/2, 1/2)$ to the right side of $R_L$}.
\end{proof}

Our main construction will be based on ``outlets'', that we define now (see Figure \ref{outlet}):
\begin{definition}
\label{def_outlet}
Given a configuration $\omega \in \{0,1\}^{\mathbb{Z}^3_*}$, we say that the origin $0$ in $\mathbb{Z}^3$ is an \emph{elementary outlet} if $\omega_v=1$  for $v=(0, \pm 1/2, 1/2),(0, \pm 1/2, 3/2)$, and $\omega_v=0$ for $v=(0, \pm 1/2, - 1/2),(0, \pm 1/2, - 3/2)$.
\end{definition}

\begin{figure}[h]
\begin{center}
\includegraphics[width=6cm]{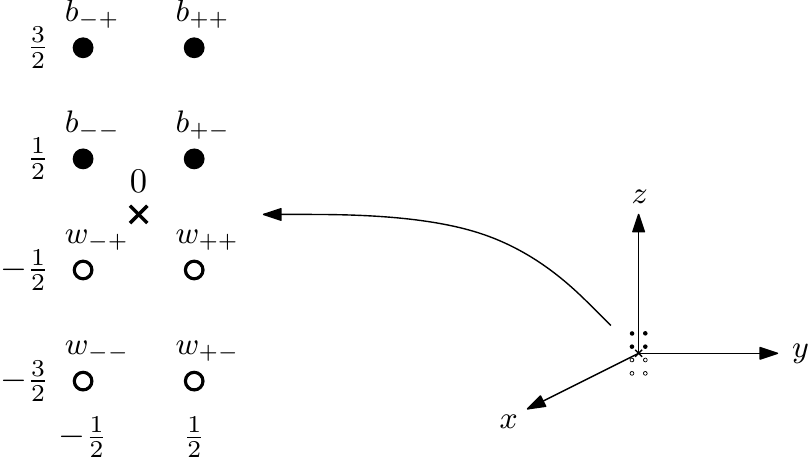}
\caption{\label{outlet} {\rm{An elementary outlet (contained in the plane $x=0$).}}}
\end{center}
\end{figure}

We will later need to refer to the vertices in such an outlet: we denote the 1-vertices by $b_{\pm\pm} = (0,0,1)+(0,\pm 1/2,\pm 1/2)$, and the 0-vertices by $w_{\pm\pm} = (0,0,-1)+(0,\pm 1/2,\pm 1/2)$.

\begin{definition}
\label{def_L_outlet}
Given a configuration $\omega \in \mathbb{Z}^3_*$, we say that the origin $0\in \mathbb{Z}^3$ is an \emph{$L$-outlet} if:
\begin{itemize}
\item[(i)] $0$ is an elementary outlet,

\item[(ii)] and there are four connecting paths $(\gamma_i)_{1 \leq i \leq 4}$, as depicted on Figure~\ref{L_outlet}:
\begin{itemize}
\item $\gamma_1$ (resp. $\gamma_2$) is a 1-path staying inside $R^{(1)}_L = [(-L,L) \times (0,8L) \times (1,2L+1)] \cap \Zst$ (resp. $R^{(2)}_L = [(-L,L) \times (-8L,0) \times (1,2L+1)] \cap \Zst$) and connecting the vertex $(0, 1/2, 3/2)$ (resp. $(0, -1/2, 3/2)$) to the right side of $R^{(1)}_L$ (resp. the left side of $R^{(2)}_L$),
\item $\gamma_3$ (resp. $\gamma_4$) is a 0-path staying inside $R^{(3)}_L = [(-L,L) \times (-8L,0) \times (-2L-1,-1)] \cap \Zst$ (resp. $R^{(4)}_L = [(-L,L) \times (0,8L) \times (-2L-1,-1)] \cap \Zst$) and connecting the vertex $(0, -1/2, -3/2)$ (resp. $(0, 1/2, -3/2)$) to the left side of $R^{(3)}_L$ (resp. the right side of $R^{(4)}_L$).
\end{itemize}
\end{itemize}
{Given a configuration $\omega \in \{0,1\}^{\mathbb{Z}^3_*}$, we say that the vertex $v \in \mathbb{Z}^3$ is an $L$-outlet if $0$ is an $L$-outlet for the configuration $\tau_{v}(\omega)$, where $(\tau_{v}(\omega))_u = \omega_{u+v}$}.
\end{definition}

\begin{figure}
\begin{center}
\includegraphics[width=12cm]{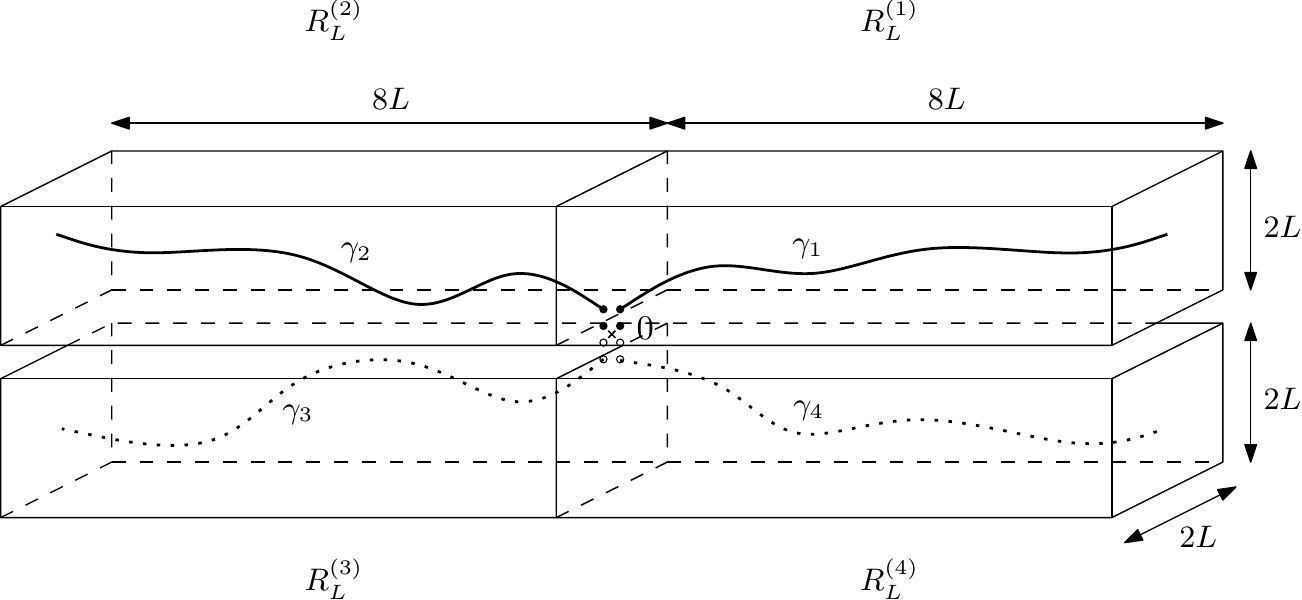}
\caption{\label{L_outlet} {\rm{An $L$-outlet consists of an elementary outlet, together with four connecting paths as depicted.}}}
\end{center}
\end{figure}

\begin{lemma} \label{lemma:0_outlet}
There exist $\delta_1>0$ and $L_1 \geq 1$, depending only on $p$, such that for all $L \geq L_1$,
\begin{equation} \label{proba_outlet}
\mathbb{P}_p(\text{$0$ is an $L$-outlet}) \geq \delta_1.
\end{equation}
\end{lemma}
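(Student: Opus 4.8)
The plan is to realize the $L$-outlet event as an intersection of \emph{independent} events, each of which has probability bounded below uniformly in $L$. The starting point is the observation that the four connecting paths begin at the four ``outer'' vertices of the elementary outlet, namely the $1$-vertices $b_{++}=(0,1/2,3/2)$, $b_{-+}=(0,-1/2,3/2)$ and the $0$-vertices $w_{+-}=(0,1/2,-3/2)$, $w_{--}=(0,-1/2,-3/2)$. Since a $1$-path (resp.\ $0$-path) issued from a vertex forces that vertex to carry the value $1$ (resp.\ $0$), the prescribed state of each outer vertex is automatically implied by the existence of the corresponding $\gamma_i$. Hence, if I let $A_0$ denote the event that the four remaining ``inner'' vertices $b_{+-},b_{--}$ (which must be $1$) and $w_{++},w_{-+}$ (which must be $0$) are in their prescribed states, and $A_i$ ($1\le i\le 4$) the event that the connecting path $\gamma_i$ exists inside $R^{(i)}_L$, then
\[
\{\text{$0$ is an $L$-outlet}\} = A_0 \cap A_1 \cap A_2 \cap A_3 \cap A_4 .
\]

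Next I would check that these five events have pairwise disjoint supports, so that they are mutually independent under $\mathbb{P}_p$. The four boxes $R^{(1)}_L,\dots,R^{(4)}_L$ occupy the four ``quadrants'' $\{y>0,z>0\}$, $\{y<0,z>0\}$, $\{y<0,z<0\}$, $\{y>0,z<0\}$ of $\Zst$ and are therefore pairwise disjoint; moreover each box only contains sites with $|z|\ge 3/2$, so the four inner vertices (at height $|z|=1/2$) lie outside all of them. Independence then yields
\[
\mathbb{P}_p(\text{$0$ is an $L$-outlet}) = \mathbb{P}_p(A_0)\prod_{i=1}^{4}\mathbb{P}_p(A_i),
\]
with $\mathbb{P}_p(A_0)=p^2(1-p)^2>0$.

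It remains to bound each $\mathbb{P}_p(A_i)$ below, and this is where Lemma \ref{lemma:one_connection} does the work. The box $R^{(1)}_L$ is the box $R_L$ of that lemma translated by $(0,0,1)$, and $\gamma_1$ is exactly the path it provides, so $\mathbb{P}_p(A_1)\ge\delta_0$ for $L$ large. The remaining three cases follow by symmetry: $R^{(2)}_L$ is the image of $R^{(1)}_L$ under the reflection $y\mapsto -y$, which is a lattice symmetry of $\Zst$ preserving $\mathbb{P}_p$; and for the $0$-paths $\gamma_3,\gamma_4$ I would apply the color-swap $\omega\mapsto 1-\omega$, under which a $0$-path at parameter $p$ becomes a $1$-path at parameter $1-p$. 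Since $1-p\in(p_c(d),1-p_c(d))$ as well, Lemma \ref{lemma:one_connection} is available at parameter $1-p$ (after the reflections $z\mapsto -z$ and $y\mapsto -y$). Taking $\delta_0$ to be the smaller of the two constants obtained at parameters $p$ and $1-p$, and $L_1$ the larger of the two thresholds, gives $\mathbb{P}_p(A_i)\ge\delta_0$ for all $i$ and all $L\ge L_1$, whence the claim holds with $\delta_1=p^2(1-p)^2\,\delta_0^4$.

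The only delicate point is the independence step: one must verify that the four connecting boxes are genuinely disjoint and that the inner elementary-outlet vertices avoid all of them, so that no site is shared between two of the five events. This is precisely why the boxes were placed in the four separate quadrants of the $(y,z)$-plane and lifted to heights $|z|\ge 1$, leaving the $|z|=1/2$ vertices free. Everything else reduces to a direct application of Lemma \ref{lemma:one_connection} combined with the reflection and color-swap symmetries of the model.
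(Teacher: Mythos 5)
Your proof is correct and follows essentially the same route as the paper's: apply Lemma \ref{lemma:one_connection} at parameter $p$ (for $\gamma_1,\gamma_2$) and at $1-p$ (for $\gamma_3,\gamma_4$) in the four disjoint boxes, use independence coming from disjoint supports together with the four inner vertices $(0,\pm\tfrac12,\pm\tfrac12)$, and conclude with $\delta_1 = p^2(1-p)^2\delta_0^4$. The paper states this in three sentences; you merely make explicit the details it leaves implicit (that the outer vertices' values are forced by the paths, and the reflection/color-swap symmetries), which is harmless.
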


\begin{proof}
Applying Lemma \ref{lemma:one_connection} with $p$ or $1-p$ to each of the four disjoint boxes $R^{(1)}_L, \ldots, R^{(4)}_L$, we get that with a probability at least $\delta_0(p)^2 \delta_0(1-p)^2$, the four paths $\gamma_1, \ldots, \gamma_4$ all exist. Finally, with probability $p^2(1-p)^2$, each of the four vertices $(0,\pm \frac{1}{2},\pm \frac{1}{2})$ has the right 0-1 value. This completes the proof, with $\delta_1 = p^2(1-p)^2 \delta_0(p)^2 \delta_0(1-p)^2$.
\end{proof}

{\subsection{Block argument}
\begin{definition}
We say that the box
\begin{equation}
B_L = \big((-2L,2L) \times (-8L,8L) \times (-2L-1,2L+1)\big) \cap \Zst
\end{equation}
is \emph{good} if the following three properties are satisfied:
\begin{itemize}
\item[(i)] there exists $k \in (-L,L)$ such that $(k,0,0)$ is an $L$-outlet,

\item[(ii)] the event $V^1_L((-6L,6L)\times (4L,8L) \times (1,2L+1))$ occurs,

\item[(iii)] the event $V^0_L((-6L,6L)\times (4L,8L) \times (-2L-1,-1))$ occurs.
\end{itemize}
\end{definition}

Note that property (i) only depends on the state of the vertices inside $B_L$ (since $B_L$ contains the four boxes in the definition of $(k,0,0)$ being an $L$-outlet).

\begin{lemma} \label{lemma:proba_good}
One has:
\begin{equation}
\mathbb{P}_p (\text{$B_L$ is good})\, {\longrightarrow}\, 1\,  \text{{ as $L \to \infty$}}.
\end{equation}
\end{lemma}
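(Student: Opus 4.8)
The plan is to show that each of the three defining properties of a good box holds with probability tending to $1$ as $L \to \infty$, and then to combine them via a union bound (or the FKG inequality). Properties (ii) and (iii) are handled immediately: the events $V^1_L$ and $V^0_L$ are applied to boxes whose side lengths are all comparable to $L$ (indeed, the dimensions $12L$, $4L$, and $2L+1$ all lie in the range $[L, 10L]$ once $L$ is not too small), so that \eqref{fact2} of the first lemma gives
$$
\mathbb{P}_p\big(V^1_L((-6L,6L)\times (4L,8L) \times (1,2L+1))\big) \geq 1 - c_1 e^{-c_2 L},
$$
and likewise for the $V^0_L$ event. Each of these therefore tends to $1$ exponentially fast.

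The substantive work is property (i): the existence of \emph{some} $k \in (-L,L)$ for which $(k,0,0)$ is an $L$-outlet. Here a single site is an $L$-outlet only with the fixed probability $\delta_1 > 0$ from Lemma \ref{lemma:0_outlet}, so one cannot hope to force a particular $(k,0,0)$ to be an outlet. Instead I would exploit the $2L$ candidate sites $(k,0,0)$, $k \in (-L,L)$, and argue that with high probability at least one of them succeeds. The obstacle — and the reason a naive independence argument fails — is that the outlet events for different $k$ are \emph{not} independent: the four connecting boxes $R^{(i)}_L$ attached to $(k,0,0)$ overlap heavily for nearby values of $k$. The cleanest fix is to pass to a sparse sub-collection of $k$'s that are spaced far enough apart (say $k = 2Lj$ is too far given the box width, so one instead takes spacings of order $L$ within the window, or enlarges the geometry) so that the corresponding outlet events depend on disjoint sets of vertices and are genuinely independent; then
$$
\mathbb{P}_p(\text{no such $k$ gives an $L$-outlet}) \leq (1-\delta_1)^{m},
$$
where $m$ is the number of well-separated candidates, which grows with $L$. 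Hence property (i) also holds with probability tending to $1$.

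I would then conclude by writing
$$
\mathbb{P}_p(\text{$B_L$ is good}) \geq 1 - \mathbb{P}_p(\neg(\mathrm{i})) - \mathbb{P}_p(\neg(\mathrm{ii})) - \mathbb{P}_p(\neg(\mathrm{iii})),
$$
and letting $L \to \infty$; each subtracted term vanishes, giving the claim. Alternatively, since all three events are increasing in the appropriate $0$/$1$ sense, one can apply FKG to bound the probability of the intersection below by the product of the three individual probabilities, which again tends to $1$.

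\textbf{The main obstacle} is making property (i) rigorous despite the spatial dependence among overlapping outlet windows; the key idea is to extract enough well-separated, hence independent, candidate locations inside the box so that a fixed per-site success probability $\delta_1$ is amplified to near-certainty as the number of candidates grows with $L$. The geometry must be checked carefully: one needs the selected outlet windows to fit inside $B_L$ and to use disjoint vertex sets, which constrains how the spacing is chosen and is the only place where the precise dimensions of $B_L$ really enter.
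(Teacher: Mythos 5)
Your handling of properties (ii) and (iii) matches the paper: both follow from \eqref{fact2}, and combining the three properties by a union bound is exactly what is needed. (A minor point: the box in (ii)--(iii) has side lengths $12L \times 4L \times 2L$, and $12L \notin [L,10L]$, so the first lemma does not literally apply with $N=L$; the paper glosses over the same issue, so I do not count it against you.) The genuine gap is in your treatment of property (i), which is the heart of the lemma. Your plan is to extract well-separated values of $k$ whose outlet events are supported on disjoint vertex sets, hence independent, and then amplify via $(1-\delta_1)^m$ with $m \to \infty$. This is geometrically impossible: the event that $(k,0,0)$ is an $L$-outlet depends on the four boxes $R^{(i)}_L + (k,0,0)$, whose union has $x$-extent exactly $(k-L,k+L)$, while the candidates range over $k \in (-L,L)$. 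For any two candidates $k \neq k'$ in this window one has $|k-k'| < 2L$, so their supports always overlap --- indeed every candidate's support contains vertices with $x$-coordinate near $0$. Hence there is no sub-collection of even \emph{two} independent candidates; your $m$ equals $1$, and the bound $(1-\delta_1)^m$ gives nothing as $L \to \infty$. Your two suggested fixes do not work either: spacings ``of order $L$'' still yield overlapping windows, and ``enlarging the geometry'' is not available, because the size of $B_L$ and the scale of the outlets are dictated by the rest of the proof (the renormalized lattice $\mathbb{Z}^2_L$ has spacing $4L \times 12L$ and Lemma \ref{lemma:oriented} relies on the resulting $2$-dependence; moreover the gluing events $V^1_L$, $V^0_L$ can only merge clusters of diameter at least $L$, so outlets at a fixed scale or at scale $o(L)$ would not plug into the final construction).

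The paper proves (i) by a mechanism that requires no decorrelation at all: it combines Lemma \ref{lemma:0_outlet} with ergodicity of $\mathbb{P}_p$ under lattice translations to conclude that the probability of finding some $k \in (-L,L)$ with $(k,0,0)$ an $L$-outlet tends to $1$. That translation-invariance route is what replaces your independence argument, and it is the missing idea in your proposal. A last remark: your FKG fallback is also not available, since none of the three defining events of a good box is monotone --- $V^1_L$ and $V^0_L$ are complements of ``two large disjoint clusters fail to be joined inside the box'', which is neither increasing nor decreasing, and the outlet event prescribes both $1$-paths and $0$-paths. The union bound is the correct (and the paper's) way to combine the three properties.
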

\begin{proof}
{It follows from Lemma \ref{lemma:0_outlet}, and ergodicity of the measure $\mathbb{P}_p$ under lattice translations, that
\[
\mathbb{P}_p\big(\text{$\exists \, k \in (-L,L)$ s.t. $(k,0,0)$ is an $L$-outlet}\big)\, {\longrightarrow}\, 1\, \text{ as } L \to \infty.
\]}
By \eqref{fact2}, we also have that the probabilities of the events $V^1_L((-6L,6L)\times (4L,8L) \times (1,2L+1))$ and $V^0_L((-6L,6L)\times (4L,8L) \times (-2L-1,-1))$ tend to $1$ as $L \to \infty$, so the result follows.
\end{proof}

We now describe the {block argument that will be used in order to prove} Theorem \ref{thm:words}.
For each pair $(i,j) \in \mathbb{Z}^2$, we first introduce $v_L(i,j) = (4iL, 12jL,0)$.
We then define the lattice $\mathbb{Z}^2_L = (\mathbb{V}_L, \mathbb{E}_L)$ having vertex set
\[
\mathbb{V}_L = \{v_L(i,j): \text{$i+j$ is even}\},
\]
and edge set $\mathbb{E}_L$ given by
\[
\langle v_L(i,j), v_L(i',j') \rangle \in \mathbb{E}_L \text{ {if, and only if,}  $\big[|i-i'| = 1$ and $|j-j'|=1\big]$}.
\]
We get in this way an isomorphic copy of $\mathbb{Z}^2$ (see Figure \ref{lattice}). An infinite oriented path in $\mathbb{Z}^2_L$ is a sequence of vertices $v_L(i_0,j_0)$, $v_L(i_1,j_1)$, $v_L(i_2, j_2)\ldots$ such that for all $k \geq 0$, $\langle v_L(i_k,j_k) , v_L(i_{k+1},j_{k+1}) \rangle \in \mathbb{E}_L$, and also $j_{k+1} = j_{k}+1$.
\begin{figure}
\begin{center}
\includegraphics[width=11cm]{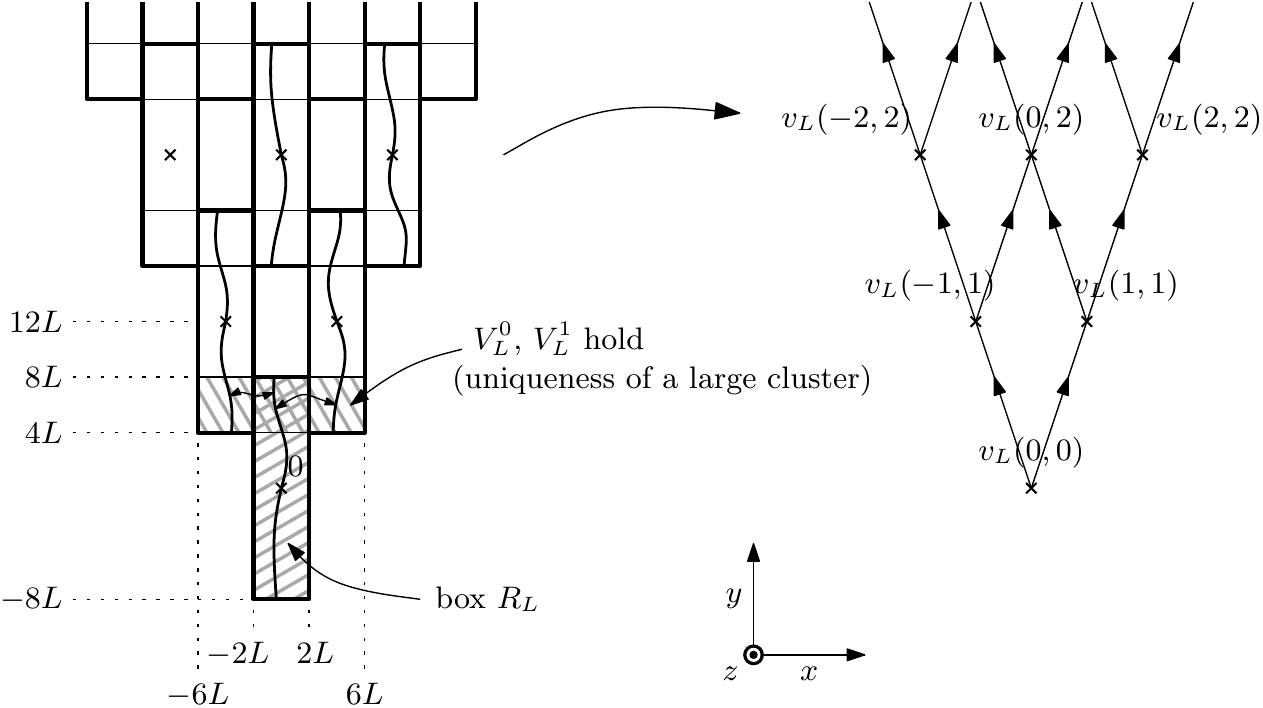}
\caption{\label{lattice} {\rm{The lattice used in the renormalization scheme.}}}
\end{center}
\end{figure}
The vertex $v_L(i,j) \in \mathbb{V}_L$ is said to be \emph{occupied} if the associated box $R_L + v_L(i,j)$ is good.

\begin{lemma}
\label{lemma:oriented}
There exists $L_2$ such that: for all $L \geq L_2$, there exists almost surely an infinite oriented path in $\mathbb{Z}^2_L$ of occupied vertices.
\end{lemma}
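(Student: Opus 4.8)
The plan is to recognize the occupation process $\{\mathbf{1}[v_L(i,j) \text{ is occupied}]\}_{(i,j)}$ as a finitely dependent site process on $\mathbb{Z}^2_L \cong \mathbb{Z}^2$ whose density can be pushed arbitrarily close to $1$, and then to compare it from below with supercritical i.i.d.\ oriented percolation. First I would pin down the dependence structure. The event that $B_L + v_L(i,j)$ is good is measurable with respect to the restriction of $\omega$ to the region $D + v_L(i,j)$, where $D = (-6L,6L)\times(-8L,8L)\times(-2L-1,2L+1)$ contains the box $B_L$ of property (i) as well as the two boxes of properties (ii)--(iii). Since $v_L(i,j) = (4iL,12jL,0)$, two such regions $D + v_L(i,j)$ and $D + v_L(i',j')$ are disjoint as soon as $|i-i'| \geq 3$ or $|j-j'| \geq 2$; hence the occupation variables are independent whenever the corresponding lattice sites are separated by more than a fixed, $L$-independent distance. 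In particular the occupation field is $k$-dependent for some universal $k$.

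Next I would record that, by translation invariance and Lemma \ref{lemma:proba_good}, all occupation variables share the common marginal $\rho_L := \mathbb{P}_p(B_L \text{ is good}) \to 1$ as $L \to \infty$. Combining the previous two points, the Liggett--Schonmann--Stacey domination theorem for finitely dependent fields applies: for every target density $\hat\rho < 1$ there is $\rho^\ast < 1$ such that, whenever $\rho_L \geq \rho^\ast$, the occupation field stochastically dominates an i.i.d.\ Bernoulli site percolation on $\mathbb{Z}^2_L$ with parameter $\hat\rho$.

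I would then choose $\hat\rho$ strictly above the critical threshold $\vec{p}_c < 1$ for oriented site percolation on $\mathbb{Z}^2$, and fix $L_2$ so large that $\rho_L \geq \rho^\ast$ for all $L \geq L_2$. For the dominating Bernoulli field the origin sends out an infinite oriented path with positive probability; since the event ``there exists an infinite oriented path'' is translation invariant and the i.i.d.\ measure is ergodic under the relevant translations, that event has probability one. Finally, ``there exists an infinite oriented path of occupied vertices'' is an increasing event, so the stochastic domination transfers this almost-sure occurrence from the Bernoulli field to the occupation field, which proves the lemma for all $L \geq L_2$.

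The main obstacle is precisely the lack of independence caused by the overlap of neighboring boxes: the regions $D + v_L(i,j)$ genuinely intersect for adjacent lattice sites, which rules out a direct i.i.d.\ oriented percolation argument. The resolution is the observation that this dependence has bounded range in the renormalized lattice, uniformly in $L$, so that the standard domination machinery reduces everything to the well-understood supercritical regime of i.i.d.\ oriented percolation. An alternative, should one wish to avoid invoking the domination theorem, is a direct Peierls-type contour estimate controlling the oriented crossings under $k$-dependence; but the domination route is shorter and equally rigorous.
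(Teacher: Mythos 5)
Your proposal is correct and follows essentially the same route as the paper: both establish that the occupation field on the renormalized lattice is a finitely dependent ($2$-dependent) process whose marginal tends to $1$ (via Lemma \ref{lemma:proba_good} and translation invariance), invoke the Liggett--Schonmann--Stacey domination by product measures, and conclude by comparison with supercritical i.i.d.\ oriented site percolation on $\mathbb{Z}^2$. The only difference is one of detail: you verify the bounded-range dependence explicitly through disjointness of the regions $D + v_L(i,j)$ and spell out the final ergodicity/monotonicity step, both of which the paper leaves implicit.
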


\begin{proof}
We know from Lemma \ref{lemma:proba_good} that for any pair $(i,j)$,
\[ 
\mathbb{P}_p(\text{$v_L(i,j)$ is occupied}) =  \mathbb{P}_p(\text{$v_L(0,0)$ is occupied})\rightarrow 1 \text{ as $L \to \infty$.}
\]
Moreover, the event that $v_L(i,j)$ is occupied depends only on the state of the vertices of $\mathbb{V}_L$ within  a graph distance at most two, so that we have a $2$-dependent percolation process. We can thus use a domination by independent percolation \cite{Liggett_Schonmann_Stacey}, which completes the proof.
\end{proof}

\subsection{End of the proof}

We are now in a position to complete the proof. Consider $L_2$ provided by the previous lemma. First, it is an easy observation that from an oriented path as before, one gets a sequence of elementary outlets, where two successive outlets are connected as on Figure \ref{double_path} by 1- and 0-paths of length at most
$$\ell = \ell(L_2) = (12L_2-1) \times 12L_2 \times (2L_2+1).$$

Let us be a bit more precise. If we denote by $(b^i_{\pm\pm})_{i \geq 0}$ and $(w^i_{\pm\pm})_{i \geq 0}$ the vertices that compose the successive outlets, we have constructed two sequences of paths $(\gamma^i_b)_{i \geq 0}$ and $(\gamma^i_w)_{i \geq 0}$, all disjoint of each other, such that for all $i \geq 0$,
\begin{itemize}
\item $\gamma^i_b$ (resp. $\gamma^i_w$) starts at $b^i_{++}$ (resp. $w^i_{+-}$) and ends at $b^{i+1}_{-+}$ (resp. $w^{i+1}_{--}$),

\item and $\gamma^i_b$ (resp. $\gamma^i_w$) has a number $\lambda^i_b$ (resp. $\lambda^i_w$) of vertices (including the extremities) which is at most $\ell$.
\end{itemize}

Let us now take $M_0 = \ell^2$, and explain how to use these paths to {embed} any $M_0$-stretched {sequence} $\xi$. Recall that we denote by $(l_i^\xi)_{i\geq 1}$ the lengths of the successive runs of $0$s and $1$s. Let us first assume that $\xi$ is not ultimately monochromatic, which means that {$\ell^2 \leq l_i^\xi < \infty$} for all $i \geq 1$. We show by induction that the first $j$ runs can be {embedded} starting from the first outlet, and ending in the $k_j$-th outlet (for some $k_j$), on one of the four center vertices $b^{k_j}_{\pm-}$ or $w^{k_j}_{\pm+}$, that we denote by $v_j$.

We just need to explain how to {embed} the $(j+1)$th run, of length $l_{j+1}^\xi \geq M_0 = \ell^2$. We start to {embed} it at $v'_j$, the vertex in the $k_j$th outlet which is adjacent to $v_j$ and has opposite 0-1 value. {We may assume without any loss of generality}, that this run is a 1-run (so that $\omega_{v_j}$ and $\omega_{v'_j}$ are respectively 0 and 1).

Let us introduce $I = \sup \{ i \geq 1 : \lambda^{k_j}_b + \ldots + \lambda^{k_j+i-1}_b \leq l_{j+1}^\xi-4\}$, then
\begin{itemize}
\item from $\lambda^{k_j}_b + \ldots + \lambda^{k_j+I}_b \geq l_{j+1}^\xi-3$ and the definition of $I$, we have
\begin{equation}
\label{eq:sum_lambda}
l_{j+1}^\xi - \ell-3 \leq \lambda^{k_j}_b + \ldots + \lambda^{k_j+I-1}_b \leq  l_{j+1}^\xi-4,
\end{equation}
\item and using that $\lambda^{k_j}_b + \ldots + \lambda^{k_j+I}_b \leq (I+1) \times \ell$, we get that
\begin{equation}
\label{eq:Igeq}
I  \geq \frac{l_{j+1}^\xi-3}{\ell}-1 \geq \frac{\ell^2-3}{\ell} -1 \geq \ell-2.
\end{equation}
\end{itemize}
Let us now consider the path $\rho_{j+1}$ obtained by starting from $v'_j$, following $\gamma^{k_j}_b, \ldots, \gamma^{k_j+I-1}_b$ (if $v'_j$ is $b^{k_j}_{--}$, then we use $b^{k_j}_{+-}$ before following $\gamma^{k_j}_b$), and ending with one extra vertex at $b^{k_j+I}_{--}$. This path has a length $L \in \lambda^{k_j}_b + \ldots + \lambda^{k_j+I-1}_b + \{2,3\}$, which satisfies
$$l_{j+1}^\xi - \ell - 1 \leq L \leq l_{j+1}^\xi -1$$
(using \eqref{eq:sum_lambda}).
First, we can make sure that $L$ and $l_{j+1}^\xi$ have the same parity: if they have different parity, we add $b^{k_j+I}_{+-}$ to the end of $\rho_{j+1}$. We thus get a path $\rho'_{j+1}$, with a length $L'$ satisfying
$$l_{j+1}^\xi - \ell - 1 \leq L' \leq l_{j+1}^\xi.$$
Now, to reach a path of length $l_{j+1}^\xi$ exactly, we just need to play with the $(I-1)$ intermediary outlets, with indices from $k_j+1$ to $k_j+I-1$. Indeed, each of these outlets allows one to add two 1-vertices to $\rho'_{j+1}$ (by making a detour via the two center vertices), and we need to add at most ($\ell+1$) 1-vertices: this can be done since
$$I-1 \geq \ell-3 \geq \frac{\ell+1}{2},$$
which finally shows that the $(j+1)$th run can be seen, as desired. The result then follows by induction.

Clearly, the case when $\xi$ is ultimately monochromatic can be handled in the same way, except that the procedure ends after a finite number of steps. This completes the proof of Theorem \ref{thm:words}.

\section*{Acknowledgements}

V.S. would like to thank Omer Angel for many fruitful discussions. 
P.N. would like to thank UFMG and IMPA, and V.S. the Forschungsinstitut für Mathematik at ETH, for their hospitality.
The research of V.S. was supported in part by Brazilian CNPq grants 308787/2011-0 and  476756/2012-0 and FAPERJ grant E-26/102.878/2012-BBP.
The research of B.N.B.L. was supported in part by CNPq (grants 302437/2011-8 and 470803/2011-8) and FAPEMIG (Programa Pesquisador Mineiro).
This work was also supported by ESF RGLIS Excellence Network.
This research was initiated when P.N. was affiliated with the Courant Institute (New York University), when it was supported in part by the NSF grants OISE-0730136 and DMS-1007626.


\begin{thebibliography}{999}

\bibitem {BS}
{R. Basu, A. Sly,
Lipschitz embeddings of random sequences, 
{\em Probab. Th. Rel. Fields}, to appear (2012). arXiv:1204.2931}

\bibitem {Barsky_Grimmett_Newman}
{D. J. Barsky, G. R. Grimmett, C. M. Newman,
Percolation in half-spaces: equality of critical densities and continuity of the percolation probability,
{\em Probab. Th. Rel. Fields} {\bf 90}, 111--148 (1991).}

\bibitem {Benjamini_Kesten}
{I. Benjamini, H. Kesten,
Percolation of arbitrary words in $\{0,1\}^\mathbb{N}$,
{\em Ann. Probab.} {\bf 23}, 1024--1060 (1995).}

\bibitem {BR}
{B. Bollob\'as, O. Riordan,
The critical probability for random Voronoi percolation in the plane is $1/2$,
{\em Probab. Th. Rel. Fields} {\bf 136}, 417--468 (2006).}
\bibitem {Campanino_Russo}

{M. Campanino, L. Russo,
An upper bound on the critical percolation probability for the three-dimensional cubic lattice,
{\em Ann. Probab.} {\bf 13}, 478--491 (1985).}

\bibitem {D}
{M. Dekking,
On the probability of occurrence of labelled subtrees of a randomly labelled tree,
{\em Theoret. Comput. Sci. } {\bf 65}, 149--152 (1989).}

\bibitem {Grassberger}
{P. Grassberger,
Critical percolation in high dimensions,
{\em Phys. Rev. E} {\bf 67}, 036101 (2003).}

\bibitem {KLSV}
{H. Kesten, B. N. B. de Lima, V. Sidoravicius, M.E. Vares,
On the compatibility of binary sequences, 
{\em Commun. Pure and Appl. Math}, to appear (2012). arXiv:1204.3197}

\bibitem {KSV}
{H. Kesten, V. Sidoravicius, M.E. Vares, 
Oriented percolation in a random environment,
preprint (2012). arXiv:1207.3168}

\bibitem {Kesten_Sidoravicius_Zhang1}
{H. Kesten, V. Sidoravicius, Y. Zhang,
Almost all words are seen in critical site percolation on the triangular lattice,
{\em Elec. J. Probab.} {\bf 3}, no.10, 1--75 (1998).}

\bibitem {Kesten_Sidoravicius_Zhang2}
{H. Kesten, V. Sidoravicius, Y. Zhang,
Percolation of arbitrary words on the close-packed graph of $\mathbb{Z}^2$,
{\em Elec. J. Probab.} {\bf 6}, no.4, 1--27 (2001).}

\bibitem {Liggett_Schonmann_Stacey}
{T. M. Liggett, R. H. Schonmann, A. M. Stacey,
Domination by product measures,
{\em Ann. Probab.} {\bf 25}, 71--95 (1997).}

\bibitem{deLima}
{B. N. B. de Lima,
A note about the truncation question in percolation of words,
{\em Bull. Braz. Math. Soc.} {\bf 39}, 183--189 (2008).}

\bibitem {Penrose_Pisztora}
{M. D. Penrose, A. Pisztora,
Large deviations for discrete and continuous percolation,
{\em Adv. Appl. Probab.} {\bf 28}, 29--52 (1996).}

\bibitem{R}
{L. T. Rolla,
private communication (2013).}

\bibitem {Wierman}
{J. C. Wierman,
A-B percolation: a brief survey,
{\em Combinatorics and Graph Theory} {\bf 25}, 241--251,
Banach Center Publications (1989).}

\end{thebibliography}
\end{document}